\documentclass{amsart}
\usepackage{amsmath, amssymb, amsthm}
\usepackage{hyperref}
\usepackage{xcolor}
\usepackage{graphicx}
\usepackage{soul}
\usepackage{amsaddr}

\newtheorem{theorem}{Theorem}[section]
\newtheorem{proposition}[theorem]{Proposition}

\newtheorem{corollary}[theorem]{Corollary}
\theoremstyle{definition}
\newtheorem{definition}[theorem]{Definition}

\title[Gradient systems and asymmetric relaxations in Riemannian Geometry]{Gradient systems and asymmetric relaxations in view of Riemannian geometry}
\author[Bravetti et al.]{Alessandro Bravetti$^1$, Miguel Ángel García Ariza$^{2,\star}$, José Roberto Romero-Arias$^2$}
\address{$^1$School of Science and Technology, University of Camerino, Camerino, Italy}
\address{$^2$Instituto de Investigaciones en Matemáticas Aplicadas y en Sistemas,\\ Universidad Nacional Autónoma de México, Mexico City, Mexico}
\thanks{$\star$\texttt{miguel.garcia@iimas.unam.mx}}

\keywords{Information geometry, gradient flows, asymmetric relaxations, non‑metricity tensor}

\begin{document}

\begin{abstract}

    In dually flat manifolds, there is a deep connection between gradient flows and pregeodesics. This was one of the many important contributions of Amari to information geometry. In this paper, we extend the study of this relationship to general Riemannian manifolds. Our result does not impose conditions of flatness on the connection or symmetry on its non-metricity tensor, thus broadening the geometric setting beyond Hessian manifolds. Within this framework, we provide a criterion for comparing relaxation along two different gradient descent curves of a function, formulated in terms of the non‑metricity tensor of a connection for which the gradient curves are pregeodesics. We use it to study Gaussian chains, whose relaxation trajectories coincide with gradient descent curves in the space of Gaussian distributions.
    Thus, we recover a recent result that establishes a universal asymmetry: warming up is faster than cooling down. 
    Our work illustrates how geometric insights rooted in Amari’s legacy offer new perspectives for optimization problems and stochastic processes.

\end{abstract}

\maketitle

\begin{flushright}
{\it This paper is dedicated to Professor Shun-ichi Amari in honor of his 90th birthday}
\end{flushright}

\section{Introduction}\label{sec:intro}

There is a classical result in mechanics, known as the Jacobi-Maupertuis Theorem, that relates the trajectories of a system to geodesics \cite{Abraham1967Foundations,arnold1989mathematical,pettini2007geometry}. 
It states that, 
given a Riemannian manifold $(M,g)$ and a mechanical Hamiltonian 
$$H=\frac{1}{2}||p||_g^2+V(q)$$ on $T^*M$, the projections to $M$ of
the trajectories of the Hamiltonian system at fixed energy $E$
are reparametrized geodesics of the Jacobi metric $$g^\text{J}=2(E-V(q))g\,.$$
%Clearly the Jacobi metric is only one of many possibilities of a metric that has this property.\\

The Jacobi–Maupertuis theorem can be restated in terms of connections: the Hamiltonian trajectories 
at fixed energy are reparameterized geodesics of the Levi‑Civita connection of the Jacobi metric $g^\text{J}$.
Since the Hamiltonian vector field $X_H$ is the \emph{symplectic gradient} of $H$, defined by $$\omega(X_H,\cdot)=\mathrm{d}H\,,$$ 
the theorem provides a  connection that turns symplectic‑gradient curves into geodesics.

Analogously, the Riemannian gradient of a function $f$ is defined by 
\begin{equation*}
g(\operatorname{grad} f,\cdot)=\mathrm{d} f\,.
\end{equation*}
A natural question then arises: given a Riemannian manifold $(M,g)$ and a function $f$, can we always find a connection that turns gradient curves into geodesics?

Fujiwara and Amari gave a partial answer to this question showing that on dually flat (Hessian) manifolds, 
the gradient flow of a canonical divergence from a fixed point follows 
pregeodesics of the corresponding flat connection \cite{fujiwara_gradient_1995}.
  
Despite the limitation of this result, coming from the assumption of dual-flatness, it has inspired continued interest, with recent applications spanning Weyl geometry, Hamiltonian dynamics, and non‑equilibrium physics \cite{wada_eikonal_2021,wada_huygens_2023,wada_weyl_2023,chanda_mechanics_2024,wada_hamiltonian_2024,wada_onsagers_2025}. It has also oriented the quest to find canonical divergences on non-dually flat manifolds (see e.g.~\cite{ay_novel_2015} and the discussion therein).

Because the state space of a thermodynamic system is dually flat, we can use Fujiwara and Amari’s result to study asymmetric relaxations in this context. By asymmetric relaxations we mean processes in which a system evolves faster from one initial state than from another to a common final equilibrium state, even when both initial states are equally distant from it \cite{lapolla_faster_2020,van2021toward,ibanez2024heating,tejero2025asymmetries,teza2026speedups}. Using the canonical divergence as the measure of distance to the final state, we showed that the faster relaxation curve is the one for which the Amari–Chentsov tensor evaluated along the curve is smaller whenever the speeds coincide \cite{bravetti_asymmetric_2025}. This endowed the Amari–Chentsov tensor with a clear dynamical meaning, yet the result remained confined to the dually flat setting.

%\textcolor{red}{Inspired by this result,} we turned to the problem of asymmetric relaxations in thermodynamics \cite{bravetti_asymmetric_2025},%
%where a system evolves to equilibrium faster from one initial state than from another, even when both are equally distant from equilibrium~\cite{lapolla_faster_2020,granada,others}. 
%Considering that \textcolor{red}{the space of states of} thermodynamic systems are examples of Hessian manifolds for which the canonical divergence 
%has precisely the meaning of the distance to equilibrium,
%we provided a geometric criterion to compare two gradient-descent curves \textcolor{red}{this} divergence on a dually flat manifold. The faster curve is the one for which the Amari–Chentsov tensor \textcolor{red}{along the curve} is smaller whenever the speeds coincide. This gave the Amari–Chentsov tensor a clear dynamical role, yet it remained confined to the dually flat setting of Fujiwara and Amari’s theorem.

This naturally raises a second question:
%\begin{center}
    Can we generalize this criterion beyond dually flat manifolds?
%    makes the gradient curves of a function reparameterized geodesics?
%\end{center}

Guided by these questions, we present two main results. First, we construct a connection whose pregeodesics are the gradient curves of a prescribed function. This extends Fujiwara and Amari’s theorem beyond dual flatness, and provides the foundation for the second result. Using this connection, we derive an asymmetry criterion valid on any Riemannian manifold, expressed in terms of the connection's non‑metricity tensor. 

We illustrate the scope of this extended framework 
by applying it to recover the universal warming‑cooling asymmetry in Gaussian chains observed 
by Lapolla and Godec \cite{lapolla_faster_2020},
thus obtaining a simplified proof of a physically-relevant relaxation asymmetry.
Moreover, this example shows that our criterion can be applied beyond the dually flat case.

This work is a continuation of the conversation between geometry and dynamics that Fujiwara and Amari started, where stochastic processes become geodesic motions and where the question of why warming differs from cooling finds a geometric answer. We have organized it as follows. First, we review previous results in Sect.~\ref{sec:gradientflows}. 
Then, we present our main results in Sects.~\ref{sec:asymmetricRM} and~\ref{sec:asymmetricrelaxations}.
In Sect.~\ref{sec:GaussianChains}, we apply this criterion to Gaussian chains, which are systems of particular physical relevance, and share our conclusions and perspectives in Sect.~\ref{sec:conclusion}.

\section{Gradient flows on dually flat manifolds}\label{sec:gradientflows}

We review here previous results to provide some context for our own. In what follows, $(M,g,\nabla,\nabla^*)$ denotes an $n$-dimensional dually flat manifold, meaning that 
\begin{equation*}
    X[g(Y,Z)]=g(\nabla_XY,Z)+g(Y,\nabla^*_XZ),
\end{equation*}
with both $\nabla$ and $\nabla^*$ flat and symmetric. 

The metric on a dually flat manifold is special, since it can locally be written as the Hessian of a function $\phi$:
\begin{equation*}
    g_{ij}=\frac{\partial^2\phi}{\partial\theta^i\partial\theta^j}.
\end{equation*}
This is why dually flat manifolds are also known as Hessian manifolds. The $\theta$ are affine parameters of $\nabla$, this is, the Christoffel symbols of $\nabla$ in these coordinates are zero.

There is something analogous for the affine parameters $\eta$ of the dual connection $\nabla^*$:
\begin{equation*}
    g^{ij}=\frac{\partial^2\psi}{\partial\eta_i\partial\eta_j},
\end{equation*}
which are also the components of the inverse matrix of $(g_{ij})$. The two metric potentials 
$\phi(\theta)$ and $\psi(\eta)$
are related via a Legendre transform: 
\begin{equation*}
    \phi(\theta)+\psi(\eta)=\theta^i\eta_i.
\end{equation*}

Amari and Nagaoka \cite{amari_methods_2007} proved that, to any flat connection, 
we may associate its $\nabla$-divergence: a non-negative two-point function on $M$ that vanishes only on the diagonal, given by
\begin{equation*}
    D_\nabla(p||q):=\phi(p)+\psi(q)-\theta^i(p)\eta_i(q).
\end{equation*}
When $M$ is the parameter space of an exponential family of distributions (which is dually flat with the Fisher metric and the so-called exponential and mixture connections), $D_\nabla$ coincides with the Kullback-Leibler divergence between two distributions. 

The remarkable property of $D_\nabla$ that Fujiwara and Amari \cite{fujiwara_gradient_1995} proved is that its gradient flows are ``straight lines'' as seen by $\nabla$ and $\nabla^*$. To be more precise, if we let 
$$D_q:=D_\nabla(q||\cdot) \qquad \text{and} \qquad D_q^*:=D_\nabla(\cdot||q)\,,$$ 
then
\begin{equation*}
    \nabla_{\operatorname{grad}D_q}\operatorname{grad}D_q=\operatorname{grad}D_q\qquad \text{and}\qquad \nabla^*_{\operatorname{grad}D_q^*}\operatorname{grad}D_q^*=\operatorname{grad}D_q^*.
\end{equation*}

That property allowed us to portray Newton's Law of Cooling as 
a gradient flow in a previous work \cite{bravetti_asymmetric_2025}. 
There, we also showed that the non-metricity tensor of the connections, 
known as the \textit{Amari-Chentsov tensor} in this context, 
is instrumental in determining which of two gradient descent curves will reach equilibrium first. 
Specifically, let $\gamma^1$ and $\gamma^2$ be two gradient descent curves 
(this is, two integral curves of $-\operatorname{grad} D_q$)
starting at the same distance to $q$ as measured by $D_q$ 
(namely, such that $D_q(\gamma^1(0))=D_q(\gamma^2(0))$). 
If $\nabla g(\dot\gamma_1,\dot\gamma_1,\dot\gamma_1)<\nabla g(\dot\gamma_2,\dot\gamma_2,\dot\gamma_2)$
whenever $||\dot\gamma^1||_g=||\dot\gamma^2||_g$,
then $\gamma_1$ will always be closer to $q$ than $\gamma_2$,
\textit{i.e.}, $D_q(\gamma_1(t))<D_q(\gamma_2(t))$ for all $t>0$.

Not much has been done in the case where $\nabla$ (and hence $\nabla^*$) is not flat. 
Matumoto proved \cite{matumoto_any_1993} that a dual manifold has infinitely many associated divergences, in the sense that the metric and the dual connections can be derived from them (as is the case of the $\nabla$-divergence). 
Ciaglia \textit{et al.}~proposed 
a Hamilton-Jacobi approach to define a natural divergence from a dynamical point of view \cite{ciaglia2017hamilton}.
They also argued that the non-dually-flat case is extremely important, 
since the space of pure states of a finite-level
quantum system does not admit a dually flat structure.
Following a different path, Amari and Ay found a generalized ``canonical divergence'' that reduces to $D_\nabla$ in the flat case \cite{ay_novel_2015}, but misses other nice properties like satisfying $D(p||q)=D^*(q||p)$, or having gradient flows follow pregeodesics.

This coincidence of gradient curves and pregeodesics is precisely what gave the Amari-Chentsov tensor its significance in our asymmetry criterion. It is also sufficient to compare relaxation speeds on any Riemannian manifold: if we can find a connection whose pregeodesics are the gradient curves of a function, then a similar criterion using the non-metricity tensor becomes possible, regardless of flatness. In the next section, we prove that such a connection always exists.

%This last condition is necessary to apply our above result on asymmetric relaxations, and most importantly, it is also sufficient. This observation will allow us to extend it beyond the realm of dually flat manifolds.

\section{Gradient flows follow pregeodesics on Riemannian manifolds\label{sec:asymmetricRM}}

The result of Fujiwara and Amari establishes that, on a dually flat manifold, the gradient flow of a canonical divergence coincides with pregeodesics of its associated flat connection. Can we always find a connection that makes the gradient curves of a given function pregeodesic on an arbitrary Riemannian manifold? We answer by constructing such a connection explicitly.

Let $f$ be a smooth function
on a Riemannian manifold $(M,g)$. 
We seek a connection $\nabla^f$ for which every gradient descent curve of $f$ is a pregeodesic, i.e.,
\begin{equation}\label{eq:gradientpregeodesic}
    \nabla^f_{\operatorname{grad}f}\operatorname{grad}f=\lambda \operatorname{grad }f,
\end{equation}
for some real function $\lambda$.

A good starting point is writing $\tilde\nabla^f_XY=\nabla^g_XY-A(X,Y)Z$, where $\nabla^g$ is the metric connection, 
$A$ is a (0,2)-tensor, and $Z$ some vector field to be determined. 
We can even make it symmetric by choosing $A$ symmetric, 
and a straightforward guess with what we have at hand is $A(X,Y)=g(X,Y)$. Then, the only missing ingredient is $Z$, which we may obtain from  
Eq.~\eqref{eq:gradientpregeodesic}:
\begin{equation}\label{eq:Z}
    \nabla^g_{\operatorname{grad f}}\operatorname{grad} f-||\operatorname{grad }f||^2Z=\lambda\operatorname{grad }f.
\end{equation}
This equation determines $Z$ uniquely wherever $\operatorname{grad}f \neq 0$. Consequently, we obtain the following general result:

\begin{theorem}\label{thm:main}
    For any function $f$, there exists a symmetric connection $\tilde\nabla^f$ defined on $M$ minus the critical points of $f$ by
    \begin{equation}\label{eq:nablaf}
        \tilde\nabla^f_XY:=\nabla^g_XY- 
        g(X,Y)\frac{\nabla^g_{\operatorname{grad}f}\operatorname{grad}f-\lambda\operatorname{grad }f}{||\operatorname{grad }f||^2},
    \end{equation}
    for some real function $\lambda$ and satisfying Eq.~\eqref{eq:gradientpregeodesic}. 
    Hence, every gradient curve of $f$ is a pregeodesic of $\tilde\nabla^f$.
\end{theorem}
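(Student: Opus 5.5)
The plan is to check directly that the formula \eqref{eq:nablaf} defines a connection, that it is torsion-free, and that it satisfies \eqref{eq:gradientpregeodesic}. Set $U:=M\setminus\{p:\mathrm{d}f_p=0\}$; this set is open. On $U$ we have $\|\operatorname{grad}f\|^2>0$, so for any fixed smooth function $\lambda$ on $U$ (for instance $\lambda\equiv 0$) the vector field
\begin{equation*}
Z:=\frac{\nabla^g_{\operatorname{grad}f}\operatorname{grad}f-\lambda\operatorname{grad}f}{\|\operatorname{grad}f\|^2}
\end{equation*}
is smooth on $U$. Then $\tilde\nabla^f_XY=\nabla^g_XY-g(X,Y)Z$, and the difference $\tilde\nabla^f-\nabla^g=-g\otimes Z$ is a $(1,2)$-tensor field. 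A standard fact is that a connection plus a $(1,2)$-tensor is again a connection. So $\tilde\nabla^f$ is a connection on $U$: it is $C^\infty(U)$-linear in $X$, and in $Y$ it obeys the Leibniz rule $\tilde\nabla^f_X(hY)=X(h)Y+h\tilde\nabla^f_XY$.

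For symmetry, compute the torsion:
\begin{equation*}
T(X,Y)=\tilde\nabla^f_XY-\tilde\nabla^f_YX-[X,Y]=\bigl(\nabla^g_XY-\nabla^g_YX-[X,Y]\bigr)-\bigl(g(X,Y)-g(Y,X)\bigr)Z.
\end{equation*}
The first bracket vanishes because the Levi-Civita connection $\nabla^g$ is torsion-free. The second vanishes because $g$ is symmetric. Hence $\tilde\nabla^f$ is symmetric.

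Next, substitute $X=Y=\operatorname{grad}f$. Since $g(\operatorname{grad}f,\operatorname{grad}f)=\|\operatorname{grad}f\|^2$, the factor in front of $Z$ cancels its denominator:
\begin{equation*}
\tilde\nabla^f_{\operatorname{grad}f}\operatorname{grad}f=\nabla^g_{\operatorname{grad}f}\operatorname{grad}f-\bigl(\nabla^g_{\operatorname{grad}f}\operatorname{grad}f-\lambda\operatorname{grad}f\bigr)=\lambda\operatorname{grad}f.
\end{equation*}
This is exactly \eqref{eq:gradientpregeodesic}, and it holds for every choice of $\lambda$.

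Finally, let $\gamma$ be an integral curve of $\pm\operatorname{grad}f$ in $U$. Then $\dot\gamma=\pm\operatorname{grad}f\circ\gamma$, and
\begin{equation*}
\frac{\tilde D^f}{dt}\dot\gamma=(\pm1)^2\,\tilde\nabla^f_{\operatorname{grad}f}\operatorname{grad}f=\lambda(\gamma(t))\,\dot\gamma\cdot(\pm1),
\end{equation*}
so $\frac{\tilde D^f}{dt}\dot\gamma=\mu(t)\,\dot\gamma$ for a scalar function $\mu$. This is the pregeodesic equation. To make the standard reparametrization explicit, choose $s(t)$ with $s''=-\mu s'$ and $s'>0$, i.e. $s'(t)=\exp\bigl(-\int\mu\,dt\bigr)$. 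Then $\beta(s):=\gamma(t(s))$ satisfies $\frac{\tilde D^f}{ds}\beta'=0$, so $\beta$ is a geodesic.

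There is no genuine obstacle here. The only point requiring care is the domain: $\tilde\nabla^f$ is defined only away from the critical points of $f$. This causes no loss, because a nonconstant integral curve of the gradient never reaches a critical point in finite time. I would also remark that $\lambda$ stays arbitrary: each choice gives a different connection, all with the same property.
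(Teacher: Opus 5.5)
Your proof is correct and follows the paper's route. The paper obtains the same formula from the ansatz $\tilde\nabla^f_XY=\nabla^g_XY-g(X,Y)Z$ (symmetric because $g$ is) by solving Eq.~\eqref{eq:Z} for $Z$; you verify the resulting connection directly, and add the connection axioms, the torsion computation and the remark about the domain.

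There is one sign slip in your optional reparametrization. Since $\frac{\tilde D^f}{ds}\beta'=\frac{1}{s'^2}\bigl(\mu-\frac{s''}{s'}\bigr)\dot\gamma$, the condition is $s''=\mu s'$, i.e.\ $s'(t)=\exp\bigl(\int\mu\,dt\bigr)$, not $\exp\bigl(-\int\mu\,dt\bigr)$. This does not affect the theorem, because $\frac{\tilde D^f}{dt}\dot\gamma=\mu\dot\gamma$ is already the pregeodesic condition.
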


The construction in Eq.~\eqref{eq:nablaf} is just a particular solution to the problem of finding a connection satisfying Eq.~\eqref{eq:gradientpregeodesic} for a given function $f$. Others may exist, but their classification lies beyond the scope of our present work. For the moment, we only define them formally:

    \begin{definition}
        We call any connection $\nabla^f$ satisfying
        Eq.~\eqref{eq:gradientpregeodesic} a \emph{straightening connection of $f$}.
        Equivalently, we say that $\nabla^f$ \emph{straightens} $f$.
    \end{definition}

Notice that in the definition of $\tilde\nabla^f$ we may take $\lambda\equiv0$, 
so that $\operatorname{grad}f$ is actually a geodesic of the connection. 
This will be useful to simplify some calculations later. 

It is important to remark that, although every gradient curve of $f$ is a pregeodesic of $\tilde \nabla^f$, the converse is not true.
Indeed, at any point $x\in M$ there are infinitely many geodesics (one for each tangent direction), 
while there is only one gradient curve.

When $f$ is the canonical divergence $D_\nabla(q||\cdot)$ of a flat connection (or its dual), the connection $\tilde\nabla^f$ constructed above generally does not coincide with $\nabla$. However, its defining condition given by Eq.~\eqref{eq:gradientpregeodesic} 
is the essential property for the asymmetry criterion we will develop later, where the non-metricity tensor is the main character. Actually, the non‑metricity tensor 
$$\tilde C^f(W,X,Y) := \tilde \nabla^f_W g(X,Y)$$ 
need not be totally symmetric. A direct calculation gives
\begin{equation}\label{eq:nonmetricity}
    \tilde C^f(W,X,Y) = g(W,X)g(Y,Z) + g(W,Y)g(X,Z),
\end{equation}
where $Z$ solves Eq.~\eqref{eq:Z}. This lack of full symmetry contrasts with the Amari–Chentsov tensor, and illustrates how our framework goes beyond the scope of statistical manifolds (including dually flat manifolds).

%\textcolor{blue}{This result is similar in spirit and generality to the well-known result establishing that a
%trajectory of a Hamiltonian system (of mechanical type) can be rewritten as a (reparametrized) geodesic with an appropriate metric. Typically, this is  5the Jacobi metric in configuration space, but there can be also other choices~\cite{pettini2007geometry}).}

%\textcolor{blue}{Another possible comment is the following: 
%The result of Fujiwara and Amari only holds on dually flat manifolds \emph{and only if} $f$ is the corresponding canonical divergence. In this sense, when we restrict to functions $f$ having a unique global minimum $q$, but otherwise general, the function $\tilde f(x):=f(x)-f(q)$ corresponds to $D_q:=D_{\nabla^f}(\cdot||q)$, where $D_{\nabla^f}$ would be \emph{the canonical divergence} of the dual manifold $(M,g,\nabla^f,(\nabla^f)^*)$, which is non-flat in general. Having a quick look at the discussion here~\cite{felice2021towards} it seems that we might actually have a case (See the discussion at the beginning of page 6 in the arXiv version).}\textcolor{red}{Suena muy interesante; me gustaría discutirlo con más detalle para saber bien qué escribir al respecto.}
Theorem \ref{thm:main} provides a recipe to straighten any function $f$. This can be viewed as a partial answer to an inverse problem: while the classical result of Fujiwara and Amari constructs gradient flows that are geodesics given a dually flat structure, our theorem allows us to derive a connection from a function, so that the pregeodesic property of $\operatorname{grad} f$ is automatically guaranteed.
In the special case where $f$ has a unique global minimum $q$, the shifted function $\tilde f(x):=f(x)-f(q)$ 
is always non-negative and vanishes only at $q$. 
This makes it tempting to consider $\tilde f$ as ``generating''
%study the conditions under which such
%functions are ``induced'' by a divergence $D$, in the sense that $D(x,q)=\tilde f(x)$.
%Such a divergence may be then regarded as a ``canonical divergence'', 
%\st{or in the sense of Felice and Ay \cite{felice2021towards}, as an analogue of Hamilton's principal function,} 
%since it generates 
the dualistic structure $(M,g,\tilde\nabla^f,(\tilde\nabla^f)^*)$. %, even when this structure is not dually flat. 
This perspective is reminiscent of Matumoto's approach \cite{matumoto_any_1993}, who showed that any statistical manifold can be derived from a divergence. 
Our work could then extend this idea to a broader setting, where the function $f$ is not necessarily a divergence, but induces a connection that straightens it. %, offering a new link between mechanics and geometry.

In dually flat manifolds, for any submanifold $S$,
the $\nabla$-geodesic from a point $q$ to a minimizer $\hat{p} \in S$ of the canonical divergence $D_q$ meets $S$ orthogonally. 
Thus, minimization of $D_q$ over $S$ coincides with orthogonal projection along $\nabla$-geodesics. The later extension of canonical divergences by Ay and Amari obeys this so-called \textit{geodesic projection property} under two assumptions: uniqueness of $\nabla$-geodesics connecting any two points, and the condition that the inverse exponential map at $q$ be proportional to $\operatorname{grad} D_q$ \cite{ay_novel_2015}. 

\begin{figure}
    \centering
    \includegraphics[width=\linewidth]{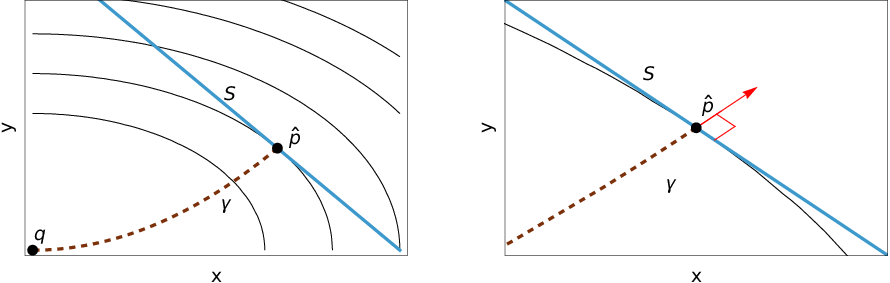}
    \caption{Geodesic projection property. The $\nabla^f$-geodesic $\gamma$ from $q$ to $\hat p$ (left) meets the submanifold $S$ orthogonally at the minimizer $\hat p$ of $f$ on $S$ (right).}
    \label{fig:gpp}
\end{figure}

The same holds for every $\nabla^f$ whose geodesics connecting any two points are unique, because any connection that straightens $f$ has an inverse exponential map proportional to $\operatorname{grad}f$ (see Fig. \ref{fig:gpp}). We provide an explicit proof below.

    \begin{proposition}\label{prop:geodesicprojection}
    Let $\nabla^f$ be a connection that straightens $f$, such that the geodesic connecting any two points is unique. 
    Then $\nabla^f$ has the geodesic projection property.
    \end{proposition}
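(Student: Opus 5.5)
We first make precise what has to be shown. Fix $q$ with $f$ attaining its unique minimum there (so $\operatorname{grad}f$ vanishes only at $q$ in the relevant region), a submanifold $S$, and a minimizer $\hat p$ of $f|_S$. The geodesic projection property asks that the $\nabla^f$-geodesic $\gamma$ from $q$ to $\hat p$ meets $S$ orthogonally at $\hat p$, i.e.\ $g(\dot\gamma(\hat p),v)=0$ for all $v\in T_{\hat p}S$. The plan has two steps. First, show that $\dot\gamma$ at $\hat p$ is proportional to $\operatorname{grad}f(\hat p)$. Second, use the first-order condition for the constrained minimum: $\mathrm{d}f_{\hat p}(v)=g(\operatorname{grad}f(\hat p),v)=0$ for every $v\in T_{\hat p}S$. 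The second step is immediate once the first is known.

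For the first step we use that $\nabla^f$ straightens $f$. Take the integral curve $c$ of $-\operatorname{grad}f$ through $\hat p$. By Eq.~\eqref{eq:gradientpregeodesic}, $\nabla^f_{\dot c}\dot c=-\lambda\dot c$ along $c$. The standard reparametrization lemma then applies: solve $s''+\lambda(c(s))\,s'^2=0$ (suitably signed), so that $\sigma=c\circ s$ satisfies $\nabla^f_{\dot\sigma}\dot\sigma=0$. Hence $c$ is, up to reparametrization, a $\nabla^f$-geodesic.

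Next we argue that $c$ runs from $\hat p$ to $q$. Since $f$ decreases along $c$ and $q$ is the unique critical point/minimum, the descent curve accumulates at $q$. To conclude that it actually reaches $q$ within the geodesic's domain, we either use completeness, or close it up via the limit at $q$ and the hypothesis that geodesics between points are unique. Uniqueness of the geodesic joining $q$ and $\hat p$ then forces $\gamma$ to coincide, as a point set, with the reparametrized gradient curve. Therefore $\dot\gamma(\hat p)=\mu\,\operatorname{grad}f(\hat p)$ with $\mu\neq0$, which is the statement ``the inverse exponential map at $q$ is proportional to $\operatorname{grad}f$'' invoked before the proposition. Equivalently, $\exp_q^{-1}(p)$ corresponds to the tangent of the gradient line, paralleling the Ay--Amari hypothesis.

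The main obstacle is the step identifying $\gamma$ with the gradient line: guaranteeing that the reparametrized gradient curve genuinely extends as a $\nabla^f$-geodesic up to $q$. The difficulty is that $\nabla^f$, for instance $\tilde\nabla^f$ of Theorem~\ref{thm:main}, is only defined off critical points, and the reparametrization may blow up as $\operatorname{grad}f\to0$. We would handle this by working on the punctured domain and interpreting ``the geodesic connecting $q$ and $\hat p$'' as the maximal geodesic whose closure contains $q$, which is the setting in which the uniqueness hypothesis is meant. If needed, we would state explicitly the assumption that the gradient line through $\hat p$ reaches $q$. With this in place, the orthogonality follows in one line from the first-order condition.
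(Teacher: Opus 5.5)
Your proposal is correct and follows the same route as the paper. Both arguments combine the first-order condition $g(\operatorname{grad}f(\hat p),v)=0$ for $v\in T_{\hat p}S$ with the fact that straightening, together with uniqueness of geodesics, makes the velocity of the $\nabla^f$-geodesic from $q$ to $\hat p$ parallel to $\operatorname{grad}f(\hat p)$.

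The paper asserts this last point in one line. You instead make the reparametrization explicit, and you rightly flag that identifying the geodesic with the gradient line through $\hat p$ requires that line to reach $q$. That does not follow from uniqueness of the minimum alone (it needs, e.g., compact sublevel sets), and the paper's proof uses the same assumption without stating it.
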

\begin{proof}
    If $\imath:S\hookrightarrow M$ is a submanifold and $f\circ\imath$ attains a minimum at $\hat p\in S$, then
\begin{equation*}
    \mathrm{d}(f\circ\imath)_{\hat p} =0.
\end{equation*}
This means that for any vector $v$ tangent to $S$ at $\hat p$, 
\begin{equation*}
    0= 
    \mathrm{d}(f\circ\imath)_{\hat p}(v)
    =\mathrm{d}f_{\hat p}(v)=g_{\hat p}(\operatorname{grad}f_{\hat p},v),
\end{equation*}
where we are denoting by the same $\hat p$ and $v$ their images under $\imath$ and $\mathrm{d}\imath_{\hat p}$, respectively. 

Finally, since the $\nabla^f$-geodesic connecting $q$ and $\hat p$ is unique, its velocity must be proportional to $\operatorname{grad}f$ because the connection straightens $f$, and so the geodesic meets $S$ orthogonally at $\hat p$.
\end{proof}

%When $f$ has a unique global minimum at $q$ and the $\nabla^f$-geodesic between any two points is unique, the minimizer of $f$ on a submanifold $S$ is precisely the orthogonal projection of $q$ onto $S$ along $\nabla^f$-geodesics. In this way, Eq. \eqref{eq:gradientpregeodesic} preserves the nice geometric relationship between a connection and its canonical divergence.

%\st{This condition not only allows us to associate a connection to a function, but also embodies the distinctive geometric properties of a canonical divergence.}

In what follows, we will use Theorem~\ref{thm:main}
to extend our previous characterization of asymmetric relaxations \cite{bravetti_asymmetric_2025} to general Riemannian manifolds.

\section{Asymmetric relaxations on Riemannian manifolds}\label{sec:asymmetricrelaxations}
%\st{In Riemannian geometry, two radial pregeodesics ending at a point $q$ and starting at the same distance from it are indistinguishable in their relaxation. The reason is that, if we  parametrize them so that $\nabla_{\dot\gamma}\dot\gamma=-\dot\gamma$, their speed decays as $||\dot\gamma(t)||=||\dot\gamma(0)||e^{-t}$. Hence the remaining distance to $q$ is simply $||\dot\gamma(0)||e^{-t}$ for both. This uniformity reflects the isotropy of the Levi‑Civita connection, for which all directions from $q$ are equivalent, at least locally.
%This isotropy is lost when $\nabla$ is a non-metric connection.}

In previous work, we showed that on dually flat manifolds, the component of $\nabla g$ along gradient descent curves of the canonical divergence determines which of two such curves relaxes first, provided that both start equally ``far'' from $q$ \cite{bravetti_asymmetric_2025}. There, we measured distance with the canonical divergence itself, and we used the fact that its gradient descent curves are $\nabla$-pregeodesics. This fact alone suffices to obtain an analogous asymmetry criterion in a much broader setting.

Before stating our result, notice that considering gradient descent curves $\gamma$ 
relaxing to the unique minimum $q$ of a function $f$ provides a natural notion of ``distance'' to $q$, given by $f(\gamma(t))$,  
assuming, without loss of generality, that $f(q)=0$.
This motivates the following definition.

    \begin{definition}\label{def:fequidistant}
       Let $\gamma_1$ and $\gamma_2$ be two integral curves of the gradient descent of $f$.   
       We say that their initial conditions are \emph{$f$-equidistant from $q$, or simply $f$-equidistant,} 
       if $f(\gamma_1(0))=f(\gamma_2(0))$. 
       Moreover, if $f(\gamma_1(t))\leq f(\gamma_2(t))$ for all $t\geq 0$, then we say that \emph{$\gamma_1$ is faster than $\gamma_2$}.
    \end{definition}

Now we can state our second main result.

\begin{theorem}\label{thm:asymmetric}
     Let $f$ be a real function on $M$ attaining its unique
     minimum at $q$. Let $\nabla^f$ straighten $f$ with $\lambda\equiv\text{const.}$ 
     Let $\gamma_1$ and $\gamma_2$ be two integral curves of the gradient descent of $f$ 
     with initial conditions $f$-equidistant from $q$.
    If 
    \begin{equation}
        C^f(\dot\gamma_1,\dot\gamma_1,\dot\gamma_1)<C^f (\dot\gamma_2,\dot\gamma_2,\dot\gamma_2),
    \end{equation}
    whenever $||\dot\gamma_1||=||\dot\gamma_2||$,
    then $\gamma_1$ is faster than $\gamma_2$.
\end{theorem}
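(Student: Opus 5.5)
Consider a gradient descent curve $\gamma$ of $f$, so that $\dot\gamma=-\operatorname{grad}f$. Write $F(t)=f(\gamma(t))$ and $v(t)=\|\dot\gamma(t)\|^2$. Along any integral curve we have $\dot F = \mathrm{d}f(\dot\gamma)=g(\operatorname{grad}f,\dot\gamma)=-v$. The plan is to derive a second equation for $\dot v$ involving $C^f$, and then compare the two curves using the resulting closed system.

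\textbf{Evolution of $v$.} Differentiate $v=g(\dot\gamma,\dot\gamma)$ using the definition of the non-metricity tensor. This gives
\[
\dot v = C^f(\dot\gamma,\dot\gamma,\dot\gamma)+2g(\nabla^f_{\dot\gamma}\dot\gamma,\dot\gamma).
\]
Since $\dot\gamma=-\operatorname{grad}f$, Eq.~\eqref{eq:gradientpregeodesic} yields $\nabla^f_{\dot\gamma}\dot\gamma=\nabla^f_{\operatorname{grad}f}\operatorname{grad}f=\lambda\operatorname{grad}f=-\lambda\dot\gamma$. Hence
\[
\dot v = C^f(\dot\gamma,\dot\gamma,\dot\gamma)-2\lambda v,\qquad \ddot F=-\dot v .
\]
Because $\lambda$ is constant, the term $-2\lambda v$ depends only on $v$ and is identical for both curves. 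The only curve-dependent input is therefore the cubic term.

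\textbf{Comparison.} Set $\Delta(t)=F_2(t)-F_1(t)$, so that $\Delta(0)=0$. We want $\Delta\ge0$ for all $t\ge0$, and the natural route is a first-crossing argument.

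At $t=0$, $f$-equidistance alone does not give $v_1(0)\ge v_2(0)$, so some care is needed. Consider instead times where the speeds coincide, $v_1=v_2$. By hypothesis $C^f_1<C^f_2$ there, so $\dot v_1<\dot v_2$, i.e.\ $\ddot F_1>\ddot F_2$.

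The hypothesis must be read as comparing the cubic terms at equal speeds, which is how it was used in \cite{bravetti_asymmetric_2025}. I would reproduce that argument: think of $v$ as a function of $F$ along each curve (valid since $F$ is strictly decreasing away from $q$) and compare $w_i(F):=v_i$ as functions of the level $F$. Then
\[
\frac{dw_i}{dF}=\frac{\dot v_i}{\dot F_i}=-\frac{C^f_i-2\lambda w_i}{w_i}.
\]
Both curves start at the same level $F_0$. If $w_1(F)\ge w_2(F)$ for all $F\in(0,F_0]$, then curve~1 descends faster through every level. It therefore reaches each level earlier, which gives $F_1(t)\le F_2(t)$. This is the comparison of $t_i(F)=\int_F^{F_0}dF'/w_i(F')$.

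To show $w_1\ge w_2$, use an ODE comparison in the variable $F$ run backwards from $F_0$. At a point where $w_1=w_2$, the hypothesis gives $dw_1/dF<dw_2/dF$. So as $F$ decreases, $w_1$ overtakes $w_2$ strictly and can never cross back below it.

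\textbf{Main obstacle.} The difficulty is the initial condition. Equidistance fixes $F$ but not $v$, so we need $w_1(F_0)\ge w_2(F_0)$. This does not follow from the ODE, since the crossing argument only controls behaviour after a point of equal speed.

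I would try to handle it in one of three ways. First, interpret the hypothesis, as in the earlier paper, as holding along the flow, so that equal speeds occur at a common level; near $q$ both speeds tend to $0$, and one can run the comparison starting from $F\to0$ upward. Second, failing that, add the assumption $\|\dot\gamma_1(0)\|\ge\|\dot\gamma_2(0)\|$. Third, observe that the strict inequality at touching points also gives the conclusion whenever the speeds agree at $t=0$.

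I expect the careful handling of this initial comparison, together with the degenerate endpoint $F\to0$ where $w\to0$, to be the delicate step.
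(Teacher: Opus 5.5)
There is a genuine gap, and it sits exactly at the step you flag as the main obstacle. You already have the key local fact. Write $\Delta=F_2-F_1$, so that $\dot\Delta=v_1-v_2$. At any time $t_*$ with $v_1(t_*)=v_2(t_*)$, the $\lambda$-terms cancel because $\lambda$ is constant, and
\[
\ddot\Delta(t_*)=\dot v_1(t_*)-\dot v_2(t_*)=C^f_1-C^f_2<0 .
\]
What you are missing is the second boundary condition that the paper uses. Besides $\Delta(0)=0$, one also has $\Delta(t)\to0$ as $t\to\infty$, because both curves relax to the unique minimum $q$ and $f(q)=0$. With it you need neither a first-crossing argument nor any control of the initial speeds.

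Suppose $\Delta(t_0)<0$ for some $t_0>0$. Then $\Delta$ attains a negative minimum on $[0,\infty)$ at some interior time $t_m$. There $\dot\Delta(t_m)=0$, i.e.\ the speeds coincide, so the hypothesis gives $\ddot\Delta(t_m)<0$, contradicting minimality. Hence $\Delta\ge0$; this is the paper's ``every critical point of $\Delta f$ is a maximum'' argument. In particular, the hypothesis itself forces $\|\dot\gamma_1(0)\|\ge\|\dot\gamma_2(0)\|$, since otherwise $\Delta$ would dip below zero immediately.

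None of your three remedies uses the limit $t\to\infty$. The second and third add assumptions that are not in the theorem. The first relies on equal speeds occurring at a common level, which fails at exactly the relevant times: at an interior maximum of $\Delta$ with $\Delta>0$, the two curves are at different levels.

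The level-set detour should be dropped. The hypothesis concerns common \emph{times} $t$, which are the critical points of $\Delta$. At a level $F$ with $w_1(F)=w_2(F)$, the two curves generally arrive at different times, so the stated hypothesis says nothing about $C^f$ there. Even granting your level-wise reading, your sign is reversed. From your own formula, at $w_1=w_2=w$ one gets $\frac{dw_1}{dF}-\frac{dw_2}{dF}=\frac{C^f_2-C^f_1}{w}>0$. So as $F$ decreases, $w_1$ drops \emph{below} $w_2$ and can never return above it.

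Thus the comparison cannot yield the pointwise bound $w_1\ge w_2$ on $(0,F_0]$. That bound is in any case much stronger than what is needed, which is only $\int_F^{F_0}(1/w_1-1/w_2)\,dF'\le0$ for all $F$. Worse, under the level-wise reading the statement is false without an extra assumption. If $w_1(F_0)<w_2(F_0)$, the correctly signed comparison forces $w_1<w_2$ at every level. The level-wise hypothesis then becomes vacuous, and $\gamma_2$ reaches every level first.

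That is why you got stuck on the initial speeds. Once the hypothesis is read at common times and combined with the limit at infinity, the difficulty disappears.
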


\begin{proof}
    We want to see that, with the above hypotheses, $\Delta f:=f(\gamma_2(t))-f(\gamma_1(t))\geq0$, for all $t\geq 0$. Given that $f$ is sufficiently well behaved, this would follow if all critical values of $\Delta f$ are maxima, since $\Delta f(0)=\lim_{t\to\infty}\Delta f=0$. 
    Denote by $\gamma$ an integral curve of the gradient descent of $f$. Then $\dot\gamma=-\operatorname{grad}f$ and
    \begin{equation}\label{eq:fdot}
        \dot f=\mathrm{d}f(\dot\gamma)=g(\operatorname{grad}f,\dot\gamma)=-||\dot\gamma||^2.
    \end{equation}
    So, when $||\dot\gamma_1||=||\dot\gamma_2||$, $\Delta f$ attains a critical value, whose nature is determined by the sign of $\Delta\ddot f.$
    From the last equation, we have that
    \begin{equation}
        \ddot f=-\dot\gamma[g(\dot\gamma,\dot\gamma)]=-\nabla^f_{\dot\gamma}g (\dot\gamma,\dot\gamma)-2g(\nabla^f_{\dot\gamma}\dot\gamma,\dot\gamma)=-C^f(\dot\gamma,\dot\gamma,\dot\gamma)-2\lambda g(\dot\gamma,\dot\gamma)=-C^f(\dot\gamma,\dot\gamma,\dot\gamma)-2\lambda \dot f,\label{eq:ddotf}
    \end{equation}
    using Eq.~\eqref{eq:gradientpregeodesic} in the third equality. 
    
    Theorem~\ref{thm:main} guarantees the existence of a connection $\nabla^f$ that straightens $f$ with $\lambda\equiv\text{const.}$ For such a connection, at a critical point $t_*$ we have
    \begin{equation}
    \Delta \ddot f(t_*)=-C^f(\dot\gamma_2, \dot\gamma_2,\dot\gamma_2)\vert_{t_*}+C^f(\dot\gamma_1,\dot\gamma_1,\dot\gamma_1)\vert_{t_*}.
    \end{equation}
    The last expression is negative if and only if
    \begin{equation}
        C^f(\dot\gamma_1,\dot\gamma_1,\dot\gamma_1)\vert_{t_*}<C^f(\dot\gamma_2,\dot\gamma_2,\dot\gamma_2)\vert_{t_*},\label{eq:main}
    \end{equation}
    and $\Delta f(t_*)$ is a maximum.
    
    Thus, if \eqref{eq:main} holds every time the descent speeds are equal, the relaxation along $\gamma_1$ will be faster than that along $\gamma_2$.

\end{proof}

\begin{figure}
    \centering
    \includegraphics[width=\linewidth]{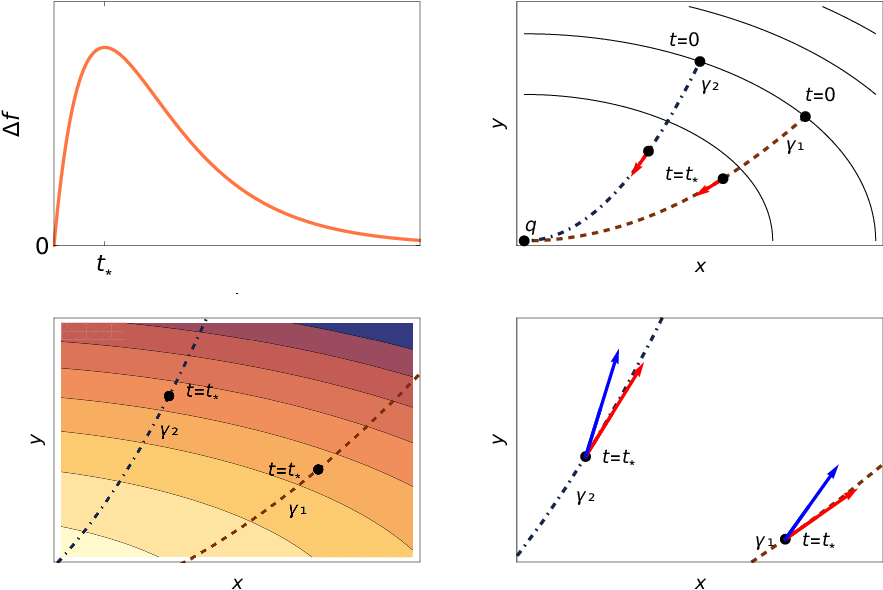}
    \caption{An illustration of our asymmetry criterion. 
    \textbf{Upper left:} $\Delta f(t)=f(\gamma_1(t))-f(\gamma_2(t))$ attains its maximum at $t=t_*$.
    \textbf{Upper right:} Two gradient descent curves $\gamma_1$ and $\gamma_2$ with $f$-equidistant initial conditions. At $t_*$,
    their velocities (red) have equal length.
    \textbf{Lower left:} Heat map of $-C^f(\operatorname{grad }f,\operatorname{grad} f,\operatorname{grad }f)$ (lighter = larger). 
    At $t_*$, $C^f(\gamma_1,\gamma_1,\gamma_1)>C^f(\gamma_2,\gamma_2,\gamma_2)$, so $\gamma_2$ relaxes faster.
    \textbf{Lower right:} Covariant acceleration (blue) at $t_*$. 
    Its tangential component (red) is negative for both curves, larger in magnitude for $\gamma_2$, 
    explaining its faster relaxation.
    }
    \label{fig:asymmetric}
\end{figure}

Equation \eqref{eq:ddotf} expresses $C^f(\dot\gamma,\dot\gamma,\dot\gamma)$  in terms of $f$ and $\lambda$ only. 
Consequently, \textit{any} straightening connection of $f$
shares this component of its non-metricity along the gradient flow. 
In particular, a connection $\nabla^f$ of which the gradient curves are geodesics ($\lambda\equiv0$) 
simplifies Eq.~\eqref{eq:ddotf} to 
    \begin{equation}\label{eq:fddotCf}
    \ddot f=-C^f(\dot\gamma,\dot\gamma,\dot\gamma)\,.
    \end{equation}

We may also express the non-metricity tensor along gradient curves in terms of the curves. Using the Levi‑Civita connection, 
\begin{equation*}
    \ddot f= -2g(\nabla^g_{\dot\gamma}\dot\gamma,\dot\gamma).
\end{equation*}
Substituting into \eqref{eq:ddotf} yields an equivalent formula that involves only $\gamma$:
\begin{equation}
    C^f(\dot\gamma,\dot\gamma,\dot\gamma) = 2\bigl[-\lambda\|\dot\gamma\|^2 + g(\dot\gamma, \nabla^g_{\dot\gamma}\dot\gamma)\bigr]
\end{equation}
Thus, Theorem~\ref{thm:asymmetric} states that
the curve having the smallest tangential metric acceleration when the relaxation speeds coincide will always be faster (see Fig. \ref{fig:asymmetric}).

Our asymmetry criterion also tells us when asymmetry is forbidden. 
    \begin{corollary}
     If $f$ is such that one of its straightening connections is a metric connection with $\lambda\equiv\text{const.}$, 
     then the gradient curves of $f$ cannot show any asymmetry.
    \end{corollary}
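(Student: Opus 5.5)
If one straightening connection $\nabla^f$ is metric, its non-metricity tensor vanishes identically, $C^f\equiv 0$. I will use this to show that the value of $f$ along any gradient descent curve is a fixed function of time and of the initial value $f(\gamma(0))$ only. Then $f$-equidistant curves stay $f$-equidistant for all time, which is the sense in which there is no asymmetry.

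First, set $h(t):=f(\gamma(t))$ for a gradient descent curve $\gamma$. Eq.~\eqref{eq:ddotf} holds for any straightening connection with $\lambda$ constant, so with $C^f\equiv0$ it reduces to the linear ODE
\begin{equation*}
\ddot h=-2\lambda\,\dot h .
\end{equation*}
Its solutions are $\dot h(t)=\dot h(0)e^{-2\lambda t}$, and by Eq.~\eqref{eq:fdot} $\dot h(0)=-\|\operatorname{grad}f(\gamma(0))\|^2$. This alone does not pin down $h$ in terms of $h(0)$, since the initial speed may still depend on the direction.

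Second, I will use the relaxation to the minimum, as in the proof of Theorem~\ref{thm:asymmetric}: $h(t)\to f(q)=0$ as $t\to\infty$. This requires $\lambda>0$, because for $\lambda\le 0$ the function $h$ would decrease without bound, contradicting $h\ge 0$. Integrating then gives
\begin{equation*}
h(t)=h(0)+\frac{\dot h(0)}{2\lambda}\bigl(1-e^{-2\lambda t}\bigr),
\end{equation*}
and letting $t\to\infty$ forces $\dot h(0)=-2\lambda h(0)$. Hence $h(t)=h(0)e^{-2\lambda t}$ for every gradient descent curve. For $f$-equidistant curves $\gamma_1,\gamma_2$ this gives $f(\gamma_1(t))=f(\gamma_2(t))$ for all $t\ge 0$. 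Each curve is then (weakly) faster than the other in the sense of Definition~\ref{def:fequidistant}, and neither is strictly faster.

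An alternative route stays closer to the criterion in Theorem~\ref{thm:asymmetric}. With $C^f\equiv0$, at every critical time $t_*$ of $\Delta f$ we get $\Delta\ddot f(t_*)=-2\lambda\,\Delta\dot f(t_*)=0$. So the strict inequality~\eqref{eq:main} can never hold, and the theorem can never certify an asymmetry. The argument above is stronger, because it shows directly that no asymmetry occurs.

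The main obstacle is the asymptotic step. It needs $h(t)\to0$ as $t\to\infty$ and $\lambda>0$, and these rest on the same standing regularity assumption used in Theorem~\ref{thm:asymmetric} ("$f$ sufficiently well behaved", with curves relaxing to $q$). I will state this assumption explicitly rather than prove it in general. If it were dropped, the conclusion would weaken: curves whose speeds agree at some instant coincide in $\dot h$ thereafter, but full equidistance would not follow.
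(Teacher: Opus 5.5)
Your proof is correct and is essentially the paper's argument. With $C^f\equiv 0$, Eq.~\eqref{eq:ddotf} becomes the constant-coefficient linear ODE $\ddot f=-2\lambda\dot f$, and the conditions at $t=0$ and $t\to\infty$ force the conclusion. The only difference is where you apply it: you solve curve by curve, which also gives the explicit law $f(\gamma(t))=f(\gamma(0))e^{-2\lambda t}$ and forces $\lambda>0$, whereas the paper applies the ODE directly to $\Delta f$, where $\Delta f(0)=\lim_{t\to\infty}\Delta f=0$ gives $\Delta f\equiv 0$ for any sign of $\lambda$.
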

    \begin{proof}
        If 
$\nabla^f$ is metric, Eq.~\eqref{eq:ddotf} reduces to
$\ddot f=-2\lambda\dot f$, and therefore we have, for all~$t$,
\begin{equation*}
    \Delta\ddot f=-2\lambda\Delta\dot f\,.
\end{equation*}
If $\lambda\equiv\text{const.}$, the conditions $\Delta f(0)=\lim_{t\to\infty}\Delta f=0$ force $\Delta f\equiv0$. 
%Hence, 
%metric pregeodesics cannot exhibit asymmetric relaxation.
%Comentario sobre esto: propongo que quitemos este último comentario, eclipsa un poco la conclusión. Pensaba ponerlo abajo, pero nuestro resultado necesita la $f$, no las geodésicas solamente. Eso que queremos decir sobre las geodésicas, creo, es lo que decimos abajo en el primer renglón (con la f).
    \end{proof}

A straightforward consequence of this is that the Riemannian distance function does not have asymmetric relaxations.

However, this result is more interesting when read in the opposite direction:
functions for which a relaxation asymmetry exists
cannot be straightened by a metric connection (with $\lambda\equiv\text{const.}$).

Symmetry, however, does not necessarily
imply metricity, because asymmetric relaxations are determined by the function $f$ only,
not by the connection used to determine them.
Indeed, we can use Theorem \ref{thm:main} to find non-metric connections
that straighten a function whose relaxation is symmetric.

%\st{a non‑metric connection may still yield symmetric relaxations for all pairs of descent curves. Setting $\Delta f\equiv0$ yields $\Delta C^f=0$ for any pair of gradient curves of $f$ originating from the same level set along their relaxation to the critical point. But this does not mean that $C^f\equiv0$. For example, take $f(x,y)=(x^2+y^2)/2$ on the Euclidean plane and $C=\mathrm{d}r^3$. Any two gradient descent curves starting on the same level curve of $f$ are lines converging to the origin. They are equivalent under rotations, so their rotation-invariant speed is the same at all times, and thus $\dot r^1=\dot r^2$ always. This yields $\Delta C=0$ along the relaxation, yet $C\not\equiv0$.}

%Here are some important remarks: iii) The result still holds if $\dot\gamma$ differs from $-\nabla^f_{\dot\gamma}\dot\gamma$ by a vector that is orthogonal to $\dot\gamma$. This basically means $\nabla^f_{\dot\gamma}\dot\gamma=\dot\gamma+X$ with $X$ tangent to the level sets of $f$. 

With Eq.~\eqref{eq:nablaf}, we can illustrate how Theorem~\ref{thm:asymmetric} 
works explicitly on a manifold that is not dually flat, which is also of physical relevance.

\section{Universal asymmetry in relaxations of Gaussian chains}\label{sec:GaussianChains}

Using our criterion, we can now prove the asymmetry in the relaxation of Gaussian chains reported by Lapolla and Godec \cite{lapolla_faster_2020}. In this context, the distance to equilibrium will be measured by the gradient potential of the dynamics, which is related to the dual Kullback–Leibler divergence.

Gaussian chains are $N+1$ beads connected by ideal springs with zero rest length. 
The dynamics of their normal modes are described by multivariate Gaussian distributions with zero means and variances $a_k$, 
which evolve according to
\begin{equation}
    a_k=2\frac{1+(\tilde T-1)e^{-2\lambda_kt}}{\lambda_k}.\label{eq:varianceLG}
\end{equation}
The $\lambda_k$ are the eigenvalues of the matrix that allows us to work with independent modes. The parameter $\tilde T$ denotes the ratio between the initial temperature of the system and the equilibrium temperature (see Supplementary Material of Ref. \cite{lapolla_faster_2020}).

An interesting feature of Gaussian chains is that they display a \textit{universal} asymmetry:
 a system warming up ($\tilde T<1$) reaches equilibrium faster than one cooling down ($\tilde T>1$) when both start equally far from equilibrium \cite{lapolla_faster_2020}, with the distance measured by the KL divergence. In what follows, we reproduce a similar result based on Theorem~\ref{thm:asymmetric}.

Eq.~\eqref{eq:varianceLG} is the solution of
\begin{equation}\label{eq:LG}
    \dot a_k=-2\lambda_k(a_k-a_k^*),
\end{equation}
with initial condition $a_k(0)=2\tilde T/\lambda_k$ and $a_k^*=2/\lambda_k$.

This is the gradient flow of 
$$F:=\sum_k^N\lambda_k\left(\frac{a_k^*}{a_k}-\ln\left(\frac{a_k^*}{a_k}\right)-1\right).$$
The metric here is the Fisher metric for multivariate normal distributions, which is block-diagonal, 
with each block corresponding to the Fisher metric of a single mode:
\begin{equation*}
    g_k=\frac{2}{a_k}\mathrm{d}\mu_k^2+\frac{1}{2a_k^2}\mathrm{d}a_k^2\,;
\end{equation*}
the gradient potential $F$ is proportional to the weighted sum of the (mixture) KL divergence of each mode,
\begin{equation*}
    D_\text{KL}(a_k^*||a_k)=\frac{1}{2}\left[\frac{a_k^*}{a_k}-\ln\left(\frac{a_k^*}{a_k}\right)-1\right].
\end{equation*}
 Since the gradient potential has a global minimum at $(a_1^*,\ldots,a_N^*)$, we can use it as a measure of distance to equilibrium.

To apply our criterion in Theorem \ref{thm:asymmetric},
we will first consider one mode of the chain.
All the modes are independent, 
so the dynamics given by Eq.~\eqref{eq:LG} is still a gradient flow,
with gradient potential $F_k:=2\lambda_kD_\text{KL}(a_k^*||a_k)$.

Using a connection that straightens $F_k$ with $\lambda\equiv0$ (see Eq.~\eqref{eq:fddotCf}) we obtain
\begin{equation*}
    -C^{F_k}(\dot\gamma,\dot\gamma,\dot\gamma)=\ddot F_k=2\lambda_k\frac{a_k^*}{a_k}\left(\frac{\dot a_k}{a_k}\right)^2,
\end{equation*}
for any gradient descent curve of $F_k$.

Now, let $\gamma^\pm=(0,a_k^\pm)$ be two such curves,
which are also solutions of Eq.~\eqref{eq:LG} given by Eq.~\eqref{eq:varianceLG}, 
having initial temperatures
$\tilde T^+>1$ and $\tilde T^-<1$, respectively.
At the critical point $t_*$ of $\Delta F_k$,
the speeds of $a_k^+$ and $a_k^-$ must be the same, meaning that
\begin{equation*}
    \left(\frac{\dot a_k^+}{a_k^+}\right)_{t_*}^2=\left(\frac{\dot a_k^-}{a_k^-}\right)_{t_*}^2=:S\,.
\end{equation*}
Plugging this into $\Delta\ddot F_k|_{t_*}$ we obtain
\begin{equation*}
    \Delta\ddot F_k|_{t_*}=-C^{F_k}(\dot\gamma^+,\dot\gamma^+,\dot\gamma^+)_{t_*}+C^{F_k}(\dot\gamma^-,\dot\gamma^-,\dot\gamma^-)_{t_*}=2\lambda_k S\left(\frac{a_k^*}{a_k^+}-\frac{a_k^*}{a_k^-}\right)_{t_*}<0\,,
\end{equation*}
because $\lambda_k>0$ and $a_k^-<a_k^*<a_k^+$ always. 
So, $\gamma^-$ is faster than $\gamma^+$, according to Theorem \ref{thm:asymmetric}.
Since this is true for every mode, 
$\sum \Delta F_k=\Delta F\geq 0$, 
and therefore the chain warms up faster than it cools down.

To make sure that we are truly beyond the dually flat case, 
we may compute the scalar curvature $s$ of $\tilde\nabla^{F_k}$ with $\lambda\equiv0$:
\begin{equation*}
    s=\frac{a_k (a_k-5 a_k^*)}{(a_k-a_k^*)^2}.
\end{equation*}

%\textcolor{blue}{
%Indeed, in principle our criterion in Theorem~\ref{thm:asymmetric} is useful to establish which of 2 initially $f$-equidistant curves relaxes the fastest.
%However, sometimes one can use this criterion to prove what we call a \emph{universal asymmetry:} suppose
%there is a relevant parameter, say the temperature $T$, characterizing the relaxation to equilibrium, say equilibrium corresponds to 
%$T=1$ and ask the following question:
%considering $f$-equidistant trajectories that start either below or above the equilibrium temperature, is there a general
%statement that trajectories warming up are always faster than those cooling down (or viceversa)?}

This example illustrates the usefulness of our criterion in Theorem~\ref{thm:asymmetric} 
to establish which of two curves starting equally away from equilibrium relaxes the fastest.
However, we have obtained a stronger result for Gaussian chains: 
any curve starting below the equilibrium temperature will relax faster 
than any other curve starting above it,
provided that they start from $F$-equidistant initial conditions. 
This is parameterized by $\tilde T$ and, in this sense, the asymmetry is \textit{universal},
and is simply stated as \textit{warming up is faster than cooling down}.

%\textcolor{blue}{Sugiero usar ``below the equilbrium temperature'' aquí arriba, porque podría haber un conflicto
%si solo decimos  ``equilibrio'' con el hecho de que el sistema se está relajando a un mínimo de $F$, lo cual también
%es un equilibrio o punto crítico de la dinámica.}

%\subsection{Riemannian geodesics are symmetric}
%\textcolor{red}{Isn't this example trivial by the fact that Riemannian geodesics are
%geodesics of the metric connection and therefore cannot exhibit asymmetry?}\\
%\textcolor{red}{If not trivial, expand this example by adding all the definitions and calculations needed to understand.}

%As a first application of our asymmetry criterion, 
%let us take $f$ as the Riemannian distance to a fixed point $q$. 
%Its gradient descent curves are geodesics parameterized by arc length. 
%Along any such curve, $f(\textcolor{blue}{\gamma(t)})=d(q,\gamma(0))-t$, and hence $\dot f=-1$ and $\ddot f=0$. 
%Eq. \eqref{eq:ddotf} then gives $C^f(\dot\gamma,\dot\gamma,\dot\gamma)=2\lambda$ for \textit{every} $\nabla^f$. 
%If additionally $\lambda\equiv\text{const.}$, $\Delta C^f=0$ for all pairs of gradient descent curves, 
%confirming that $\Delta f\equiv0$ when these curves start from the same distance to $q$. Thus, we obtain from our 
%Prop. \ref{thm:asymmetric} that the gradient descent of the Riemannian distance is symmetric.

\section{Concluding remarks}\label{sec:conclusion}

Showing the deep connection between gradient flows and pregeodesics in Hessian manifolds was part of Amari's valuable contribution to information geometry. Besides the many implications and applications of this insight, it was a fundamental component of the geometric
criterion for asymmetric relaxations that we developed previously \cite{bravetti_asymmetric_2025}. In that context, the distance to equilibrium is measured by the KL divergence, which makes sense in information geometry and thermodynamics. 
We extended this result to Riemannian geometry, considering that \textit{any} function with a unique minimum works as well. 

In dually flat manifolds, it is fundamental that the connection we use to define geodesics is flat, because pregeodesics and gradient descents of the canonical divergence coincide in that case \cite{fujiwara_gradient_1995}. However, this is no longer true for any divergence function, nor for the ``canonical'' divergence of a non-flat connection \cite{ay_novel_2015}. 
We translate this obstacle to finding a connection whose pregeodesics describe a given gradient, and we provide an explicit recipe for this in Theorem \ref{thm:main}. Of course, many more connections that straighten a given $f$ with other desirable properties may exist. Their characterization is an interesting open problem that might provide new insights to generalize canonical connections in information geometry.

There are many directions in which our result may be extended. One is to consider dynamics that are not necessarily gradient flows, for which the function $f$ in Theorem \ref{thm:main} is a Lyapunov function. 
Another is to understand if symmetry in relaxation always implies
the existence of a metric connection that straightens $f$.

Furthermore, our geometric characterization of asymmetric relaxations on 
Riemannian manifolds can be useful in optimization problems where only the initial $f$-distance 
to a minimum is known. Theorem \ref{thm:asymmetric} may help to choose an initial condition that will make the gradient descent faster.

Finally, the geometric framework developed here provides a natural setting to investigate further instances of asymmetric relaxations in thermodynamics. In particular, the Mpemba effect, both classical and quantum, offers a fertile ground for the application of these ideas \cite{biswas2023mpemba,biswas2023mpembalangevin,qian2025intrinsic,ares2025quantum,bettmann2025information}. 
Our reproduction of the asymmetric relaxation in Gaussian chains using a distance measure that is different from the Kullback–Leibler divergence, also aligns with recent work that points to a divergence-independent characterization of relaxation asymmetries \cite{van_vu_thermomajorization_2025}.

%Finally, in thermodynamics, it will be worth studying further examples of possible relaxation asymmetries using the 
%powerful tools introduced here. 
%The geometric study of the Mpemba effect (either quantum or classical) is
%also particularly interesting~\cite{biswas2023mpemba,biswas2023mpembalangevin,qian2025intrinsic,ares2025quantum,bettmann2025information}.
%Hablar aquí de "thermomajorization", de Hasegawa y Van Vu, en relación a cómo con otra medida de distancia al equilibrio reprodujimos el resultado de Lapolla y Godec, apuntando a que las relajaciones asimétricas deberían no depender de la divergencia que se use para medir distancia al equilibrio. 

These perspectives speak to the potential of the nice interplay between geometry and dynamics first illuminated by Amari and his collaborators.

\section*{Acknowledgements}
AB would like to thank Andrea Mari and Shin-itiro Goto for insightful discussions.
AB gratefully acknowledges the Simons Center for Geometry and Physics, Stony Brook University, 
and the organizers of the 2026 Program ``Contact geometry,
general relativity and thermodynamics'', 
during which some of the research for this paper was performed.

The work of MAGA was funded by SECIHTI, programa Estancias
Posdoctorales por M\'exico.  

JRRA thanks DGAPA-UNAM for the support provided by program PAPIIT Grant IN-109525.


\begin{thebibliography}{10}

\bibitem{Abraham1967Foundations}
Ralph Abraham and Jerrold~E. Marsden.
\newblock {\em {Foundations of Mechanics}}.
\newblock Mathematical Physics Monograph Series. W. A. Benjamin, 1967.

\bibitem{arnold1989mathematical}
V.I. Arnold.
\newblock {\em Mathematical methods of classical mechanics}, volume~60.
\newblock Springer, 1989.

\bibitem{pettini2007geometry}
Marco Pettini.
\newblock {\em Geometry and topology in Hamiltonian dynamics and statistical
  mechanics}.
\newblock Springer, 2007.

\bibitem{fujiwara_gradient_1995}
Akio Fujiwara and Shun-ichi Amari.
\newblock Gradient systems in view of information geometry.
\newblock {\em Physica D: Nonlinear Phenomena}, 80(3):317--327, 1995.

\bibitem{wada_eikonal_2021}
Tatsuaki Wada, Antonio~M. Scarfone, and Hiroshi Matsuzoe.
\newblock An eikonal equation approach to thermodynamics and the gradient flows
  in information geometry.
\newblock {\em Physica A: Statistical Mechanics and its Applications},
  570:125820, 2021.

\bibitem{wada_huygens_2023}
Tatsuaki Wada, Antonio~M. Scarfone, and Hiroshi Matsuzoe.
\newblock Huygens' equations and the gradient-flow equations in information
  geometry.
\newblock {\em International Journal of Geometric Methods in Modern Physics},
  20(14):2450012, 2023.

\bibitem{wada_weyl_2023}
Tatsuaki Wada.
\newblock A {Weyl} geometric approach to the gradient-flow equations in
  information geometry, 2023.
\newblock arXiv:2212.14706 [math-ph].

\bibitem{chanda_mechanics_2024}
Sumanto Chanda and Tatsuaki Wada.
\newblock Mechanics of geodesics in {Information} geometry and {Black} {Hole}
  {Thermodynamics}.
\newblock {\em International Journal of Geometric Methods in Modern Physics},
  21(05):2450098, 2024.

\bibitem{wada_hamiltonian_2024}
Tatsuaki Wada and Antoni~M. Scarfone.
\newblock A {Hamiltonian} approach to the gradient-flow equations in
  information geometry, 2024.
\newblock arXiv:2406.11224 [math-ph].

\bibitem{wada_onsagers_2025}
Tatsuaki Wada and Antonio~Maria Scarfone.
\newblock Onsager’s {Non}-{Equilibrium} {Thermodynamics} as {Gradient} {Flow}
  in {Information} {Geometry}.
\newblock {\em Entropy}, 27(7):710, 2025.

\bibitem{ay_novel_2015}
Nihat Ay and Shun-ichi Amari.
\newblock A {Novel} {Approach} to {Canonical} {Divergences} within
  {Information} {Geometry}.
\newblock {\em Entropy}, 17(12):8111--8129, 2015.

\bibitem{lapolla_faster_2020}
Alessio Lapolla and Aljaž Godec.
\newblock Faster {Uphill} {Relaxation} in {Thermodynamically} {Equidistant}
  {Temperature} {Quenches}.
\newblock {\em Physical Review Letters}, 125(11):110602, 2020.

\bibitem{van2021toward}
Tan Van~Vu and Yoshihiko Hasegawa.
\newblock Toward relaxation asymmetry: Heating is faster than cooling.
\newblock {\em Physical Review Research}, 3(4):043160, 2021.

\bibitem{ibanez2024heating}
Miguel Ib{\'a}{\~n}ez, Cai Dieball, Antonio Lasanta, Alja{\v{z}} Godec, and
  Ra{\'u}l~A Rica.
\newblock Heating and cooling are fundamentally asymmetric and evolve along
  distinct pathways.
\newblock {\em Nature Physics}, 20(1):135--141, 2024.

\bibitem{tejero2025asymmetries}
{\'A}lvaro Tejero, Rafael S{\'a}nchez, Laiachi~El Kaoutit, Daniel Manzano, and
  Antonio Lasanta.
\newblock Asymmetries of thermal processes in open quantum systems.
\newblock {\em Physical Review Research}, 7(2):023020, 2025.

\bibitem{teza2026speedups}
Gianluca Teza, John Bechhoefer, Antonio Lasanta, Oren Raz, and Marija Vucelja.
\newblock Speedups in nonequilibrium thermal relaxation: Mpemba and related
  effects.
\newblock {\em Physics Reports}, 1164:1--97, 2026.

\bibitem{bravetti_asymmetric_2025}
Alessandro Bravetti, Miguel~Ángel García~Ariza, and Pablo Padilla.
\newblock Asymmetric relaxations through the lens of information geometry.
\newblock {\em Journal of Physics A: Mathematical and Theoretical},
  58(12):125004, 2025.

\bibitem{amari_methods_2007}
Shun-ichi Amari and Hiroshi Nagaoka.
\newblock {\em Methods of information geometry}.
\newblock Number 191 in Translations of mathematical monographs. American
  Mathematical Society, USA, 2007.

\bibitem{matumoto_any_1993}
Takao Matumoto.
\newblock Any statistical manifold has a contrast function---on the
  ${C}^3$-functions taking the minimum at the diagonal of the product manifold.
\newblock {\em Hiroshima Mathematical Journal}, (2):327--332, 1993.

\bibitem{ciaglia2017hamilton}
Florio~M Ciaglia, Fabio Di~Cosmo, Domenico Felice, Stefano Mancini, Giuseppe
  Marmo, and Juan~M P{\'e}rez-Pardo.
\newblock Hamilton-jacobi approach to potential functions in information
  geometry.
\newblock {\em Journal of Mathematical Physics}, 58(6), 2017.

\bibitem{biswas2023mpemba}
Apurba Biswas, VV~Prasad, and R~Rajesh.
\newblock Mpemba effect in driven granular gases: Role of distance measures.
\newblock {\em Physical Review E}, 108(2):024902, 2023.

\bibitem{biswas2023mpembalangevin}
Apurba Biswas, R~Rajesh, and Arnab Pal.
\newblock Mpemba effect in a langevin system: Population statistics,
  metastability, and other exact results.
\newblock {\em The Journal of chemical physics}, 159(4), 2023.

\bibitem{qian2025intrinsic}
Dongheng Qian, Huan Wang, and Jing Wang.
\newblock Intrinsic quantum mpemba effect in markovian systems and quantum
  circuits.
\newblock {\em Physical Review B}, 111(22):L220304, 2025.

\bibitem{ares2025quantum}
Filiberto Ares, Pasquale Calabrese, and Sara Murciano.
\newblock The quantum mpemba effects.
\newblock {\em Nature Reviews Physics}, 7(8):451--460, 2025.

\bibitem{bettmann2025information}
Laetitia~P Bettmann and John Goold.
\newblock Information geometry approach to quantum stochastic thermodynamics.
\newblock {\em Physical Review E}, 111(1):014133, 2025.

\bibitem{van_vu_thermomajorization_2025}
Tan Van~Vu and Hisao Hayakawa.
\newblock Thermomajorization {Mpemba} {Effect}.
\newblock {\em Physical Review Letters}, 134(10):107101, 2025.

\end{thebibliography}
\end{document}